\documentclass[ a4paper
              , 10pt
              , reqno
              , submission
              ]{eptcs}

\title{Fibrational linguistics (\fiblang): First concepts}

\makeatletter%
\@ifclassloaded{amsart}{
  \usepackage{fouche/fouche}
\author{Fabrizio \textsc{Genovese}}
\author{Fosco \textsc{Loregian}}
\author{Caterina \textsc{Puca}}
\address{ Fabrizio Genovese\newline
	Statebox,\newline %
	\url{fabrizio.romano.genovese@gmail.com}%
}
\address{ Fosco Loregian\newline
	Tallinn University of Technology,\newline %
	\url{fosco.loregian@taltech.ee}%
}
\address{ Caterina Puca\newline
	Cambridge Quantum Computing,\newline %
	\url{caterpuca@gmail.com}%
}
\usepackage{hyperref}
\hypersetup{ hyperfootnotes=true
	, bookmarksnumbered=true
	, bookmarksopen=true
	, breaklinks=true
	, urlcolor=red!40!black
	, colorlinks%
	, urlcolor=RoyalBlue
	, linkcolor=Red
	, citecolor=Green
}
\usepackage{tipa}
\newcommand{\elts}[2]{
  \mathchoice{#1 \rotatebox[origin=c]{15}{$\int$} #2}
  {#1 \rotatebox[origin=c]{15}{$\int$} #2}
  {#1 \rotatebox[origin=c]{15}{\scriptsize$\int$} #2}
  {#1 \rotatebox[origin=c]{15}{\tiny$\int$} #2}
}
\thanks{The first author was supported by the \href{https://gitcoin.co/grants/1086/independent-ethvestigator-program}{Independent Ethvestigator Program}. The second author was supported by the ESF funded Estonian IT Academy research measure (project 2014--2020.4.05.19--0001). A special thanks goes to Bettino, the fat cat of figures 1, 2, and 3, and Peppa the black evil feline, for having provided valuable suggestions and
comments after an attentive proofreading of an early version of this manuscript.}
}%
{
  \usepackage{hyperref}
\hypersetup{ 
	, bookmarksnumbered=true
	, bookmarksopen=true
	, breaklinks=true
	, urlcolor=blue!40!black
	, colorlinks%
	, urlcolor=RoyalBlue
	, linkcolor=blue!40!black
	, citecolor=Green
}
\usepackage{fouche/act-fouche}
\author{ Fabrizio \textsc{Genovese}
	\email{\orcidNum{0000-0001-7792-1375}}
	\institute{Statebox}%
	  \thanks{Supported by the \href{https://gitcoin.co/grants/1086/independent-ethvestigator-program}{Independent Ethvestigator Program}.}
	\email{fabrizio.romano.genovese@gmail.com}
	\and
	Fosco \textsc{Loregian}
	\email{\orcidNum{0000-0003-3052-465X}}
	\institute{Tallinn University of Technology}%
	  \thanks{Supported by the ESF funded Estonian IT Academy research measure (project 2014--2020.4.05.19--0001).}
	\email{fosco.loregian@gmail.com}
	\and
	Caterina \textsc{Puca}
	\institute{Sapienza University of Rome}
	\email{caterpuca@gmail.com}
}

\usepackage{ newtxtext
	         , newtxmath
					 }

\newcommand{\elts}[2]{
  \mathchoice{\rotatebox[origin=c]{15}{$\int$} #2}
  {\rotatebox[origin=c]{15}{$\int$} #2}
  {\rotatebox[origin=c]{15}{\scriptsize$\int$} #2}
  {\rotatebox[origin=c]{15}{\tiny$\int$} #2}
}
}%
\makeatother

\usepackage{doi}
\usepackage[ backend=bibtex
	         , natbib=true
	         , style=numeric
	         , sorting=nyt
	         , maxbibnames=10
	         , giveninits=true
	         , backref=true
	         , url=false
	         , doi=true
	         , isbn=false
	         , arxiv=abs
	         , date=year
	         , abbreviate = true
           ]{biblatex}

\setlength\bibitemsep{0\baselineskip}
\AtBeginBibliography{\small}

\def\Spk{\cate{Fab}}

\newcommand{\doublear}[2]{\draw[->, shorten >=1mm, shorten <=1mm] #1 to[bend left=15] #2;
\draw[->, shorten >=1mm, shorten <=1mm] #2 to[bend left=15] #1;
}

\def\lCell{\xymatrix{
\ar@{>->}[d]_{e_f} \ar@{}[dr]|{\alpha_L} \ar[r]|*@{|}^p& \ar@{>->}[d]^{e_g}\\
\ar[r]|*@{|}_q&
}}
\def\rCell{\xymatrix{
\ar@{->>}[d]_{m_f} \ar@{}[dr]|{\alpha_R} \ar[r]|*@{|}^p& \ar@{->>}[d]^{m_g}\\
\ar[r]|*@{|}_q&
}}
\def\Cell{\xymatrix{
\ar[d]_f \ar@{}[dr]|\alpha \ar[r]|*@{|}^p& \ar[d]^g\\
\ar[r]|*@{|}_q&
}}


\usepackage{xspace}
\usepackage{graphbox}

\usepackage[russian, english]{babel}

\newcommand{\orcidLogo}{\includegraphics[align=c]{orcid_logo.pdf}\kern.3em}
\newcommand{\orcidNum}[1]{\href{https://orcid.org/#1}{#1}}

\NewDocumentCommand{\fib}{O{r}}{\ar@{->>}[#1]|*@{*}}
\NewDocumentCommand{\opfib}{O{r}}{\ar@{->>}[#1]|*@{o}}

\usepackage{cleveref}

\def\fiblang{FibLang\@\xspace}

\newcommand{\framew}[1]{{{#1}^\sharp}}

\addbibresource{bib/allofthem.bib}
\addbibresource{morebib.bib}
\usepackage{dialogue,csquotes}
\usepackage[T1,T2A]{fontenc}
\usepackage{anyfontsize}
\usepackage{tempora}
\usepackage[super]{nth}
\begin{document}

\maketitle
\begin{abstract}
    We define a general mathematical framework for linguistics based on the theory of fibrations,
    called \fiblang.

    We start by modelling the interaction between linguistics and cognition in the most general way possible,
    focusing on conceptually motivating any assumption we make. The advantage is that \fiblang remains agnostic to
    any particular axiomatization of grammar one may choose. As such, it is compatible with
    existing categorical language models (such as DisCoCat), providing a formally sound framework to apply
    mathematical tools developed in the context of category theory, mainly categorical logic, to the study of language.
\end{abstract}

\section{Introduction}
The work of N. Chomsky is considered revolutionary in linguistics. Among the many reasons this approach is impactful, the major one is that it gave researchers a formal system to reason about linguistic phenomena~\cite{Chomsky1956}. Later on, this work was recast in category-theoretic terms by J. Lambek, first employing sequent calculus \cite{lambek1958mathematics} and ultimately the theory of so-called \emph{pregroups} \cite{093fd03b7dc604c64124630f20a8f01231253397,lambek2010compact,lambek2004bicategories}, particular instances of compact closed categories.

The work of Lambek conjugates grammar in algebraic terms: it postulates that linguistic structures are made up of linguistic `atoms' -- \emph{nouns}, \emph{adjectives}, \emph{verbs} -- that can be put together and then \emph{reduced} into complex linguistic structures -- \emph{sentences}, \emph{noun phrases} -- leveraging the pregroup structure. As such, language can be treated as a compositional structure because it results from a composition of its parts.

In the 2000s a connection between the pregroup approach and \emph{categorical quantum mechanics} \cite{Abramsky2004, Coecke2017} started to be explored, providing a useful definition of semantics in distributional terms, commonly known as \emph{DisCoCat}~\cite{228d9e4b69926594fd26080f4cfaa9ecfca44eb3}.
Here, grammatical types are mapped to vector spaces containing statistical information about the words of that type. Grammatical reductions are mapped to linear operations, fusing the statistical properties of the linguistic constituents into statistical properties of the resulting sentences.
This has been particularly useful in quantum natural language processing and spawned even more research, culminating in recent contributions such as \emph{DisCoCirc}~\cite{coecke2019mathematics}, that are currently being tested and implemented on quantum computers~\cite{2001.00862,meichanetzidis2020quantum}.

In parallel to this, categories have been used to describe \emph{models of meaning}, where the focus shifts from computational applications to \emph{cognitive} ones. Multiple models have been used as a starting point for categorical formalization, such as \emph{conceptual spaces}, originally defined by G\"ardenfors~\cite{Alfred-2007} and formalized categorically in~\cite{a4051b40c62acb13ce5a9e3b01d3971d653ac105}.

The conceptual spaces approach has been fruitful but did not come without shortcomings, such as the difficulty of producing something more than toy models~\cite{Gogioso2016}. Moreover, it gave for granted many assumptions that may not be so readily accepted by some researchers~\cite{gayral:hal-00084948}.
\subsection*{Scope of the paper}
The present work attempts to define a general theory of language and meaning, which we refer to as `fibrational linguistics' (\fiblang). We have two main `desiderata' in mind:
\begin{itemize}
 \item\emph{We are agnostic on the structure of language}. Following Lambek, we model language as a category $\clL$, but we strive to make no additional assumption about $\clL$ and any additional properties that it might satisfy. This highly general attitude informs our \autoref{ling_doctrina}, and, more generally, all the `foundational' parts of this work in §\ref{ct_for_real}.
 \item \emph{We treat semantics as a black box}. The spate of applications of Lambek's approach of category theory to linguistics relies on the fact that we can model language as a syntactic object; works such as~\cite{a4051b40c62acb13ce5a9e3b01d3971d653ac105} highlight the semantic nature of meaning. Systems like \cite{Gardenfors2000, Gardenfors2014} build on notions of \emph{convexity} that work well in modelling specific concepts such as colour and taste but fall short when one attempts to use them in full generality.

 We concur in positing that the interplay between language and meaning can be captured through category-theoretic methods but try to do so by giving up on any controversial assumption.
\end{itemize}
To attain these two ambitious goals, we will turn some postulates about how categorial linguistics is done, and in particular the r\^ole of \emph{meaning}, upside-down. This can be summarized in the slogan `grammar is algebraic, meaning is coalgebraic' that we will come to motivate in the following sections.

To make such a general approach workable, we will employ some basic facts in the theory of \emph{fibrations} of categories. We assume some familiarity with category theory (see, e.g. \cite{working-categories, KS2}) and no familiarity with fibrations whatsoever, for which the go-to reference for the subject is~\cite{CLTT}.

We will intentionally employ friendly language, especially when building examples and presenting simplified versions of the main definitions. This design choice has two main motivations: first, our main concern is to convey the intuitive concepts that justify our stance on the philosophy of language without making things too heavy on the formal side. For this reason, we also decided to postpone the treatment of possible applications to another paper~\cite{fiblang1}. Second, we hope this work might catch the attention of members of communities that usually lack a solid mathematical background --for example, actual linguists.
\section{What meaning can(not) be}\label{sec:whatmeaningcannotbe}

A genuine question that spawns in the mind of anyone curious about language is the following: `what goes on in people's heads while they speak? What is \emph{meaning}?'
Here, experience teaches a hard lesson: almost any answer will inevitably leave much to be desired. The reasons for this are many, but we humbly pinpoint a couple:
\begin{description}
  \item[Different people think in different ways:] some people, for instance, have a distinctly spatial intuition and visualise the object of their discourse; others do not, a condition known as \emph{aphantasia} \cite{zeman2015lives, crowder2018differences}. Some people have an internal monologue when they read \cite{perrone2014little}; others do not \cite{langland2015inner}. As such, what could be very intuitive for someone could feel wrong for someone else, and while there might be general patterns to describe language acquisition, formation and usage, no model for \emph{cognition} is universal. Indeed, according to the Russian School, cognition is strongly dependent on the social context in which it occurs \cite{lave1991situated}.
  \item[There is more than language going on:] in modelling meaning, we are using language to formalise and convey what we think meaning is. This tacitly assumes that cognitive processes can be fully described within language. Thinking about it, this constitutes a huge, possibly unjustified leap of faith, or at least an extreme simplification because some cognitive processes defining abstract terms are local and dependent on the culture of reference \cite{william1921elements} 
\end{description}
From the perspective of someone interested in modelling meaning mathematically, these considerations make the `problem of meaning' almost untreatable: the only thing we may safely venture to say is that a putative category $\clD$ of `meanings' shall be treated as some black box attached to a speaker $p$ of a language $\clL$. The internal structure of $\clD$ is modelled on $\clL$, but its underpinnings are mostly inaccessible to anyone but $p$.

This means that, at least at a superficial level, there is little to no mathematics involved in describing what exactly happens while one \emph{learns} something, or what exactly happens when ideas inform language, and vice versa. However, even when assuming that a theory of meaning is essentially inaccessible, we realise that there is something mathematical to say about what happens when language is shared, extended, or changed upon use because one can attempt to model these circumstances as the effects of operations performed on the category $\clL$ that encodes the language in study.

As such, our first definition attempts to encode the fact that given a language $\clL$, its speakers `exist' in a universe parametric over $\clL$; if language is a mathematical object (a category), a speaker of said language is \emph{another} object modelled over that category.
\begin{heuristics}\label{why_functors}
The following intuitive idea informs the definition: a speaker $p$ `gathers' meanings that they attribute to a word $L\in \clL$ in a collection $\clD_L$ forming a `bundle' over $\clL$. The exact way $\clD_L$ is formed coincides with some process of `understanding' that a speaker $p$ undergoes to clarify the meaning of $L$. Such a process is unavoidably contextual: the collection $\clD_L$ is big or small depending on the `environment' in which $p$ is immersed. The exact nature of this formation process is of great philosophical interest, but at the moment it does not concern us very much: the focus is on the conglomerate $\clD$ formed as the `union' $\bigcup_L\clD_L$ and on the fact that albeit in this picture $\clL$ remains a syntactic object,
\begin{itemize}
\item its nature is never specified, and thus can be `anything' we deem natural to describe the language $\clL$ from outside, and
\item $\clD$ stands on a different ground than $\clL$: if the latter contains words, the former contains \emph{meaning}.
\end{itemize}
\end{heuristics}
This leads us to formulate a very concise definition:
\begin{definition}[Speaker]\label{def:speaker}
  A \emph{speaker} is a functor $p : \clD^p \to \clL$.
\end{definition}
This might seem just a pointless relabeling of a mathematical object into an evocative concept. For this reason, we now aim at motivating it better, culminating later on with a series of examples that will hopefully clarify in what sense the parallel between languages and categories can be re-interpreted, departing from a purely syntactic perspective to one where syntax and semantics determine, at least partly, each other.

In \autoref{def:speaker}, $\clD^p$ represents a collection of \emph{concepts}, or \emph{meanings}, that a particular speaker $p$ attributes to words and wants to convey at a given time in a given context. It can be thought of as a snapshot of the speaker's brain and, as such, it can (and will) change under `pressure' (time, social interactions, changes in grammar and semantics due to cultural shifts, wars, floods, technological advancement\dots).

The category $\clL$ instead is meant to represents language; at this stage of the discussion it is not clear what kind of properties we have to require for $\clL$: it can be treated as a purely syntactic object (borrowing from the `tradition' of categorical linguistics, from \cite{lambek1958mathematics} to \cite{228d9e4b69926594fd26080f4cfaa9ecfca44eb3,d39a3564921e3dfa0ecea92cddc2f0cb7e611511}) or not: we argue that following the latter path leads to a more fruitful interpretation for how category theory can shed light on linguistics, and we further develop our posture in~\cite{fiblang1}.
\begin{remark}\label{rem: discocat is upside down}
  Readers familiar with approaches such as DisCoCat may be surprised by~\autoref{def:speaker}, as previous work assumes functors going from grammar to semantics and not the other way around. Our approach is motivated by the fact that the grammar category, as a purely syntactical structure, is usually `thinner' than the semantics category. For instance, there may be many different meanings mapping to the same word or sentence and many more morphisms between meanings than grammatical reductions. In the case of DisCoCat, this handwavy consideration can be made precise in that a pregroup $\clL$ is formally a thin category, and this has drawbacks: indeed, considering functors from a pregroup to, say, the category of finite-dimensional vector spaces gives rise to pathological behaviour as stressed out in the work of Preller \cite{Preller2007,Preller2014}, further justifying \autoref{def:speaker}, that moreover reflects the idea of grammar as an abstraction projected out from meaning and language.
\end{remark}

The internal structure of $\clL$, intended as its properties as a category, can be a subject of endless debate, given that there is no widespread, definitive consensus about what language is --apart from a smoky definition ultimately motivated by a philosophical stance.

Given this, in \fiblang we posit $\clL$ being at least a category, mainly since it has been widely accepted --starting at least from the earliest works of Chomsky-- that language has some degree of \emph{compositionality}. But we make no further assumption on the $\clL$ that can be interesting, and instead, we formulate an extremely loose notion of \emph{linguistic doctrine} in \autoref{ling_doctrina}: a \emph{(linguistic) doctrine} a subcategory $\cate{LD}\subseteq\Cat$ whose objects are categories defined by some additional property or structure
(see also the rest of \autoref{ct_for_real}, and a roundup of examples in \autoref{roundup_1}, \ref{roundup_2}, and \ref{roundup_3}). Usual classes of posetal categories \cite{lambek1997type,ajdukiewicz35,lambek1958mathematics,grishin1983generalization} are legitimate choices for linguistic doctrines, but other choices are possible.
\begin{example}\label{roundup_1}
  The categories of Lambek protogroups and pregroups \cite{lambek1997type}, Ajdukiewicz-Bar-Hillel pomonoids \cite{ajdukiewicz35,barhillel1953quasi}, residuated pomonoids \cite{lambek1958mathematics}, Grishin pomonoids \cite{grishin1983generalization} are all examples of linguistic doctrines in the sense of \autoref{ling_doctrina}.
\end{example}
\begin{example}\label{roundup_2}
  Likewise, `syntactic categories' of various sorts, like categories with a prescribed class of limits, are all examples of linguistic doctrines.
\end{example}
If language is (with few exceptions) widely accepted to be compositional to some degree, on the other hand, meaning (i.e., the `mental image'\footnote{For illustrative purposes only, from now on we will assume that concepts are mental images.} that $p$ builds upon $\clL$) stands on a different ground: it certainly is `something that $p$ does with language'; it certainly is subject to a certain degree of compositionality --from one extreme, not compositional at all, in which case $\clD^p$ would be a \emph{discrete} category, just a set, to the opposite. But, most importantly, it can be compositional in \emph{different} ways depending on different speakers in different social or learning contexts.
\begin{figure}[htp]
  \begin{center}
    \begin{tikzpicture}[>=stealth']
      \node (bettinoDorme) at (0,3.5) {\includegraphics[scale=0.04]{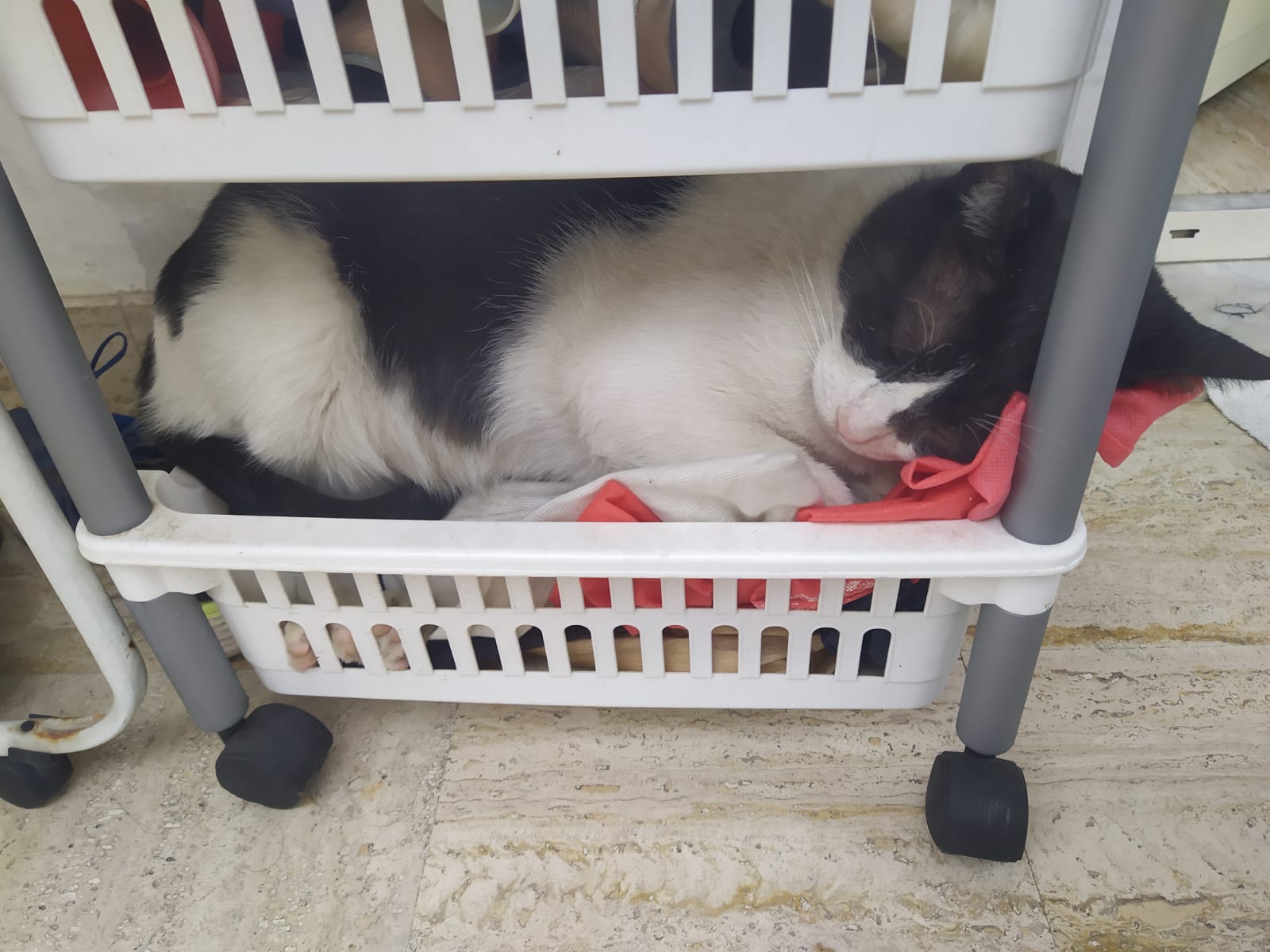}};
      \node (D) at (3,3.5) {$\clD^{\text{Alice}}$};
      \node (D') at (5,3.5) {$\clD^{\text{Bob}}$};
      \node (peppaDorme) at (8,3.5)  {\includegraphics[scale=0.04]{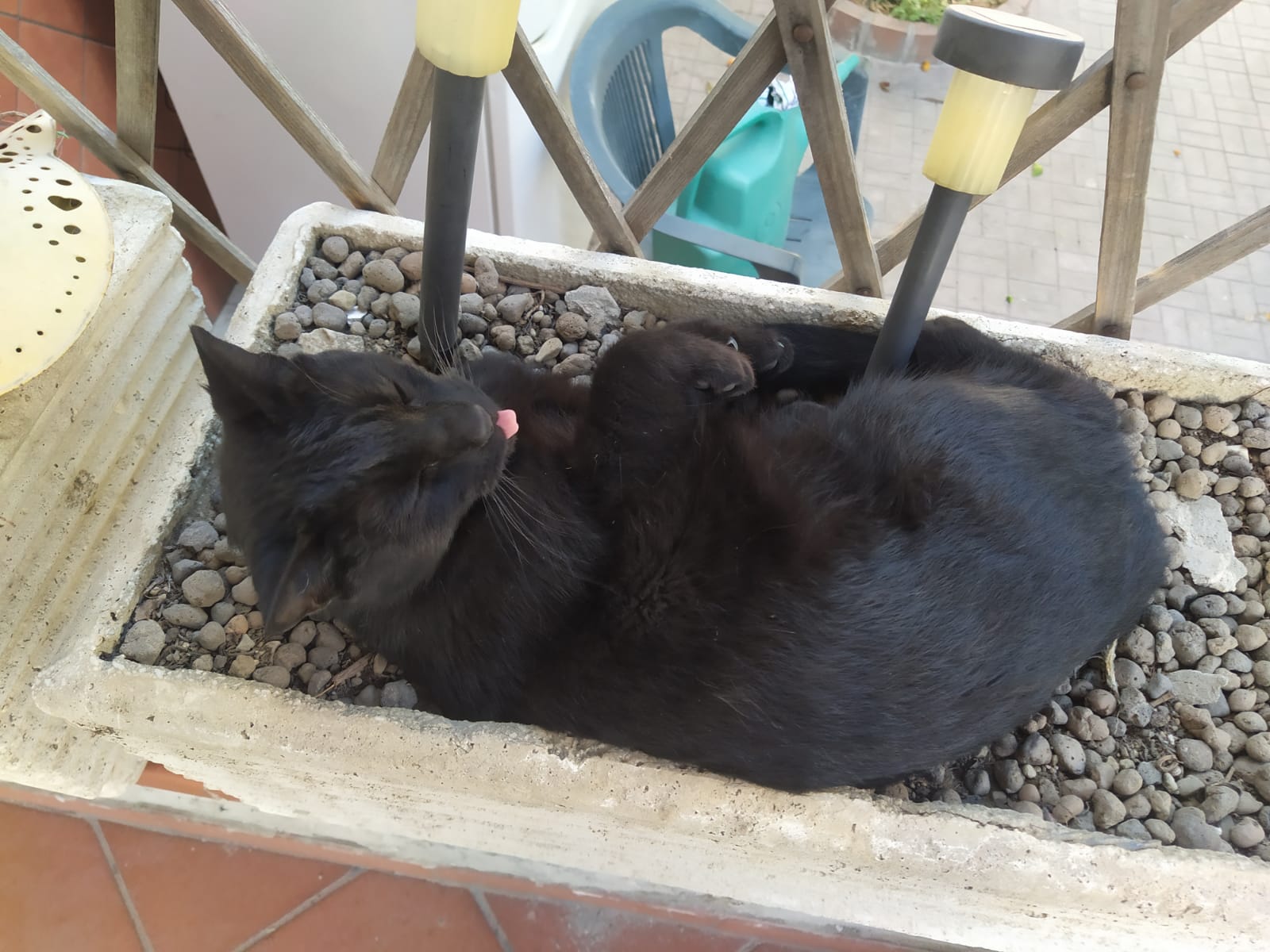}};

      \node (sentence1) at (0,1) {\emph{`the cat sleeps'}};
      \node (language1) at (3,1) {$\clL$};
      \node (language2) at (5,1) {$\clL$};
      \node (sentence2) at (8,1) {\emph{`the cat sleeps'}};

      \node (A) at (0.5,0) {Alice};
      \node (B) at (7.5,0) {Bob};

      \draw[thick] (4,0) -- (4,5);
      \draw[|->] (bettinoDorme) -- (sentence1);
      \draw[->] (D) -- (language1);
      \draw[->] (D') -- (language2);
      \draw[|->] (peppaDorme) -- (sentence2);
    \end{tikzpicture}
  \end{center}
  \caption{Different speakers may imagine things differently.}\label{fig: the cat sleeps}
\end{figure}

\begin{example}
  Two speakers, Alice and Bob, may interpret the sentence \emph{`the cat sleeps'} in different ways, as in~\autoref{fig: the cat sleeps}. Alternatively, \autoref{fig: the cat sleeps} may represent the meaning of the same sentence at different times for the same speaker.

  Even worse, the same speaker $p$ may think about different cats in the sentences \emph{`the cat sleeps'} and \emph{`the cat is fat'}. In this case, it is debatable --at least since the times of Plato-- that there is a single meaning of the word \emph{cat} that can be used in both sentences to infer the meaning of the sentences as a whole.
\end{example}





%
%

\begin{remark}[A counterpoint to \autoref{why_functors}]
  A mere functor $\clD^p\to \clL$ does not contain enough information in order for the categories $\clD_L$ to vary `coherently' with the structure of $\clL$; this gives us a way to introduce the main mathematical tool in \fiblang, the theory of \emph{fibrations} over a category.

  Intuitively speaking, this is exactly what a fibration $\var[\framew{p}]{\clE}{\clL}$ is for: define a category $\clE$ `over' $\clL$ in such a way $\clE_L$ varies smoothly as a subcategory of $\clE$, as long as $L$ varies in $\clL$. This translates into the slightly more formal request that the assignment $L\mapsto \clE_L$ is a functor, cf. \autoref{def:_fibration}, \autoref{fig_fibres}  below and \autoref{groco} in the appendix.
\end{remark}

\section{Fibrations over languages}\label{sec: fibrations over languages}
Intuitively, a fibration is a functor $\framew{p}: \clE \to \clC$ that realises the category $\clE$ as a `covering' of $\clC$, in such a way that each morphism $f : C\to C'$ in $\clC$ can be lifted to a morphism in $\clE$, so (in the case of what we call a \emph{discrete} fibration) to induce functions between the fibres in $\clE$. The original definition of this class of functors was given by A. Grothendieck, mimicking a similar concept in algebraic topology, where a fibration (of topological spaces) is defined as a continuous function $\var[p]{E}{B}$ lifting paths $\gamma : [0,1] \to B$ to paths in $E$. The analogy is meaningful, categories being in suitable sense models of `directed' spaces (cf. \cite{book91393877}), but the categorical theory of fibrations exhibits an additional layer of complexity since we must distinguish fibrations (inducing morphisms in the opposite direction of $f : C \to C'$) and \emph{op}fibrations, inducing morphism in the same direction of $f$.
\begin{definition}[Fibration]\label{def:_fibration}
  A functor $\framew{p} : \clE \to \clC$ is a \emph{discrete fibration} (for us, just a \emph{fibration}) if, for every object $E$ in $\clE$ and every morphism $f: C \to \framew{p}E$ with $C$ in $\clC$, there exists a unique morphism $h: E' \to E$ such that $\framew{p}h = f$.
\end{definition}
For notational convenience, functors that are fibrations will always be denoted with a $\sharp$. Basic facts on fibrations are recalled in a separate Appendix.
\begin{notation}\label{notat_for_fib}
  The domain of a fibration $\framew{p} : \clE \to \clC$ is usually called the \emph{total category} of the fibration, and its codomain is the \emph{base category}. Given any functor $p$ we can define the \emph{fibre} of $p$ over an object $C\in\clC$, i.e. the subcategory $\clE_C = \{f : E\to E'\mid p f = 1_C\}\subseteq \clE$.
\end{notation}
\begin{remark}
  The fibration property for $p : \clE\to\clC$ entails that fibres are discrete subcategories of the total category (that is, they are sets) and given a morphism $f : C \to C'$ in $\clC$ we can define a function $f^*$, called \emph{reindexing function}, from the fibre $\clE_{C'}$ to the fibre $\clE_C$, sending $X\in \clE_{C'}$ to the domain $X'$ of the unique $h : X' \to X$ such that $\framew{p}h = f$, which is by construction an object in $\clE_C$.
\end{remark}
We have represented an elementary example of fibration in the following figure, where the grey rectangles are the \emph{fibres} of the fibration.
\begin{figure}[ht]
\begin{center}
  \scalebox{0.8}{
    \begin{tikzpicture}[node distance=1.3cm,>=stealth',bend angle=45,auto]
      \node (C) at (-3,0) {$\clC$};
      \node (E) at (-3,2.3) {$\clE$};

      \node (1a) at (0,0) {$A$};
      \node (3a) at (3,0)  {$B$};
      \node (5a) at (6,0) {$C$};

      \draw[->] (1a) to node[above] {$f$} (3a);
      \draw[->] (3a) to node[above] {$g$} (5a);

      \node (EA) at (0.7,1.3) {$\clE_A$};
      \draw[thick, draw=black!50, fill=black!10] (-0.5,1.5) rectangle (0.5,3);
      \node[font=\scriptsize] (a0) at (-.25,2.8) {$A_0$};
      \node[font=\scriptsize] (a1) at (0.25,2.3) {$A_1$};
      \node[font=\scriptsize] (a2) at (-.25,1.7) {$A_2$};

      \node (EB) at (3.7,1.3) {$\clE_B$};
      \draw[thick, draw=black!50, fill=black!10] (2.5,1.5) rectangle (3.5,3);
      \node[font=\scriptsize] (b0) at (3.25,2.8) {$B_0$};
      \node[font=\scriptsize] (b1) at (2.75,2.3) {$B_1$};
      \node[font=\scriptsize] (b2) at (3.25,1.7) {$B_2$};

      \node (EC) at (6.7,1.3) {$\clE_C$};
      \draw[thick, draw=black!50, fill=black!10] (5.5,1.5) rectangle (6.5,3);
      \node[font=\scriptsize] (c0) at (6,2.8) {$C_0$};
      \node[font=\scriptsize] (c1) at (6,1.7) {$C_1$};

      \draw[->] (E) to node[left] {$\framew{p}$} (C);

      \draw[<-|, out=0, in=180] (a0) to (b0);
      \draw[<-|, out=0, in=180] (a2) to (b1);
      \draw[<-|, out=0, in=180] (a2) to (b2);

      \draw[<-|, out=0, in=180] (b0) to (c1);
      \draw[<-|, out=0, in=180] (b2) to (c0);

      \draw[dotted, -]  (1a.north) -- (0,1.25);
      \draw[dotted, -]  (3a.north) -- (3,1.25);
      \draw[dotted, -]  (5a.north) -- (6,1.25);
    \end{tikzpicture}}
\end{center}
\caption{Depiction of a fibration. The arrows between elements of the fibres diplay the actions of the reindexing functions. The fibres of $\framew{p}$ over $A$ and $B$ consist of three elements each and the fibre over $C$ consists of two elements; the function $f^*:\clE_B\to \clE_A$ induced by $f$ sends $B_0$ to $A_0$ and $B_1,B_2$ both to $A_2$.}
\label{fig_fibres}
\end{figure}
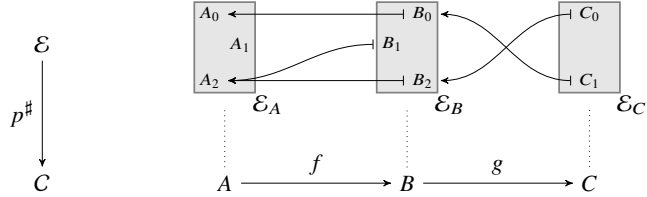

Now we clarify how fibrations relate to the considerations we made back in~\autoref{sec:whatmeaningcannotbe}. It is clear to every one acquainted with categorical approaches to language how~\autoref{def:speaker} was too general to be workable. The notion of fibration seems to fix exactly the kind of issue we would have faced staying in the generality of \autoref{sec:whatmeaningcannotbe}. What seems to be just a happy coincidence ends up being utterly justified by the following result:
\begin{theorem}[Dual of \protect{\cite[Theorem 3]{bams1183534973}}]\label{thm:_factorization}
  Any functor $p : \clD^p \to \clL$ can be written as a composition of functors $\clD^p \xrightarrow{s} \clE^p \xrightarrow{\framew{p}} \clL$, such that $\framew{p}$ is a fibration.
\end{theorem}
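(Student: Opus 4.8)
The plan is to recover the factorisation as (the dual of) the \emph{comprehensive factorisation} of Street and Walters: its output is precisely a factorisation of the shape $\clD^p \xrightarrow{\,s\,} \clE^p \xrightarrow{\,\framew{p}\,} \clL$ with $\framew{p}$ a discrete fibration, with the extra features that $s$ may be taken to be a \emph{final} functor and that such a factorisation is unique up to equivalence. The construction is short; the one genuinely load-bearing idea, due to Street--Walters, is to manufacture a \emph{presheaf} out of $p$ by taking connected components of comma categories, and the main obstacle is really just keeping the variances straight while doing so.

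First I would define $F\colon \clL^{\mathrm{op}} \to \cate{Set}$ by $F(L) := \pi_0(L \downarrow p)$, where $L\downarrow p$ is the comma category whose objects are the pairs $(D, u\colon L \to pD)$ with $D$ in $\clD^p$ and whose morphisms $(D,u)\to(D',u')$ are the arrows $g\colon D\to D'$ of $\clD^p$ with $pg\circ u = u'$. An arrow $a\colon L'\to L$ of $\clL$ acts by precomposition $L\downarrow p \to L'\downarrow p$, $(D,u)\mapsto(D,u\circ a)$, hence induces $F(a)\colon F(L)\to F(L')$; functoriality of $F$ is then immediate. Next I would set $\clE^p := \Gro{F}$, the category of elements of $F$: its objects are pairs $(L,c)$ with $c\in F(L)$, and a morphism $(L,c)\to(L',c')$ is an arrow $a\colon L\to L'$ of $\clL$ with $F(a)(c')=c$. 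The projection $\framew{p}\colon \clE^p\to\clL$, $(L,c)\mapsto L$, is a discrete fibration in the sense of \autoref{def:_fibration}: for $(L',c')$ in $\clE^p$ and $a\colon L\to L'$ in $\clL$, the unique morphism of $\clE^p$ over $a$ with codomain $(L',c')$ is $a\colon\bigl(L, F(a)(c')\bigr)\to(L',c')$. (This is the standard correspondence between presheaves on $\clL$ and discrete fibrations over $\clL$, recalled in the appendix.)

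It then remains to build $s\colon\clD^p\to\clE^p$ with $\framew{p}\circ s = p$. I would put $s(D) := \bigl(pD,\, [D,\mathrm{id}_{pD}]\bigr)$, where $[D,\mathrm{id}_{pD}]\in F(pD)=\pi_0(pD\downarrow p)$ denotes the connected component of the object $(D,\mathrm{id}_{pD})$, and $s(f) := pf$ on an arrow $f\colon D\to D'$. The one computation worth spelling out is that $s$ is well defined on arrows: the arrow $f$ is itself a morphism $(D,\mathrm{id}_{pD})\to(D',pf)$ in $pD\downarrow p$, so $F(pf)\bigl([D',\mathrm{id}_{pD'}]\bigr) = [D',pf] = [D,\mathrm{id}_{pD}]$, which is exactly the condition for $pf$ to underlie a morphism $s(D)\to s(D')$ of $\clE^p$; functoriality of $s$ and the identity $\framew{p}\circ s = p$ are then immediate, and this already establishes the statement. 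Finally, to pin $\clE^p$ and $s$ down up to equivalence — the substantive content of \cite[Theorem~3]{bams1183534973} — I would verify that $s$ is a \emph{final} functor, i.e.\ that for every $(L,c)$ in $\clE^p$ the comma category whose objects are the arrows $(L,c)\to s(D)$ in $\clE^p$ is connected; unravelling the definitions identifies this comma category with the full subcategory of $L\downarrow p$ spanned by the component $c$, which is connected and nonempty by construction. Beyond getting the variance of $F$ to agree with that of the comma categories $L\downarrow p$, I do not expect any real difficulty: the essential uniqueness can be quoted from the cited theorem rather than reproved.
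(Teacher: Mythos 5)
Your construction is correct and is essentially the paper's own: the paper obtains the factorisation as the dual of the Street--Walters comprehensive factorisation by taking the left Kan extension of the terminal presheaf along $p$ and passing to its category of elements, and your presheaf $L\mapsto \pi_0(L\downarrow p)$ is exactly the pointwise formula for that Kan extension (applied to $p^{\op}$, as the paper prescribes for the fibration-plus-final-functor version). You merely unpack the abstract comma-square argument into an explicit verification of the discrete-fibration property, the identity $\framew{p}\circ s=p$, and the finality of $s$, all of which check out.
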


This result that we recall in \autoref{stree_fattorizia} below has a direct interpretation in our framework. First, it informs us that we are 'losing nothing' in considering fibrations: as any functor can be factored canonically through a fibration, we do not need any further assumptions to inject fibrations into what has been up to now, a very general picture. We immediately take advantage of this by casting the following definition:
\begin{definition}[Speakers, recast]\label{def: language acquisition}
  In light of~\autoref{thm:_factorization}, given a speaker $\clD^p \xrightarrow{p} \clL$ factorizing as $\clD^p \xrightarrow{s} \clE^p \xrightarrow{\framew{p}} \clL$, we will often abuse notation and refer to $\framew{p}$ as `the speaker $p$'. In short, from now on we postulate that speakers are fibrations.
\end{definition}
Having reassured ourselves how considering fibrations does not lead to any loss of generality, let us look at what~\autoref{thm:_factorization} means in the context of language.
The functor $s$ is projecting the `black box category' $\clD^p$ where meanings live to a category $\clE^p$. We interpret $\clE^p$ as the category, fibred over $\clL$, where concepts have been arranged fully compositionally over the underlying language $\clL$. All the possible meanings of an element of the language $L \in \clL$ constitute the fibre over it.

\begin{equation*}
  \begin{tikzcd}[ampersand replacement=\&]
    \text{We know nothing about this}\ar[r, dashrightarrow] \& \clD^p
    \arrow{dr}{p}
    \arrow{r}{s}
    \&
    \clE^p
    \arrow{d}{\framew{p} }
    \&
    \text{This is compatible with $\clL$}\ar[l, dashrightarrow]  \\
    \& {}
    \& \clL
    \& \text{This can be described}
    \ar[l, dashrightarrow]\\
  \end{tikzcd}
\end{equation*}
The procedure outlined in \cite{bams1183534973} builds a fibration out of $p : \clD^p \to \clL$, in a canonical way; this has to be interpreted as follows: there is a procedure (an algorithm, a construction, a recipe) that a speaker applies to the mental image $\clD^p$ they have built upon $\clL$, that makes $\clD^p$ fit to the language $\clL$ (preserving as much structure as the speaker can), and ready to be shared with other speakers in a meaningful way.
\begin{example}
  Going back to our example sentence \emph{`the cat sleeps'}, in~\autoref{fig: the cat sleeps fibrational} we see how its meaning, which is not necessarily compositional over the sentence, factorises into a string of concepts that are compositional over the words forming the sentence itself. Moreover, given how general~\autoref{thm:_factorization} is, we see that such `compositionalization' of concepts works out for whatever structure we assume on the underlying language.
\end{example}

\begin{figure}[htp]
  \begin{center}
    \begin{tikzpicture}[>=stealth', scale=.725]
      \begin{scope}[xshift=5mm, yshift=0.5cm]
        \node[gray!50] (D) at (0,2) {\color{gray!50}$\clD^p$};
        \node[gray!50] (E) at (2,2) {\color{gray!50}$\clE^p$};
        \node[gray!50] (L) at (2,0) {\color{gray!50}$\clL$};
      \end{scope}
      \node (bettinoDorme) at (-2,4) {\includegraphics[scale=0.05]{figures/bettinoDorme.jpeg}};
      \node (catMeaning) at (4.5,4) {\includegraphics[scale=0.04]{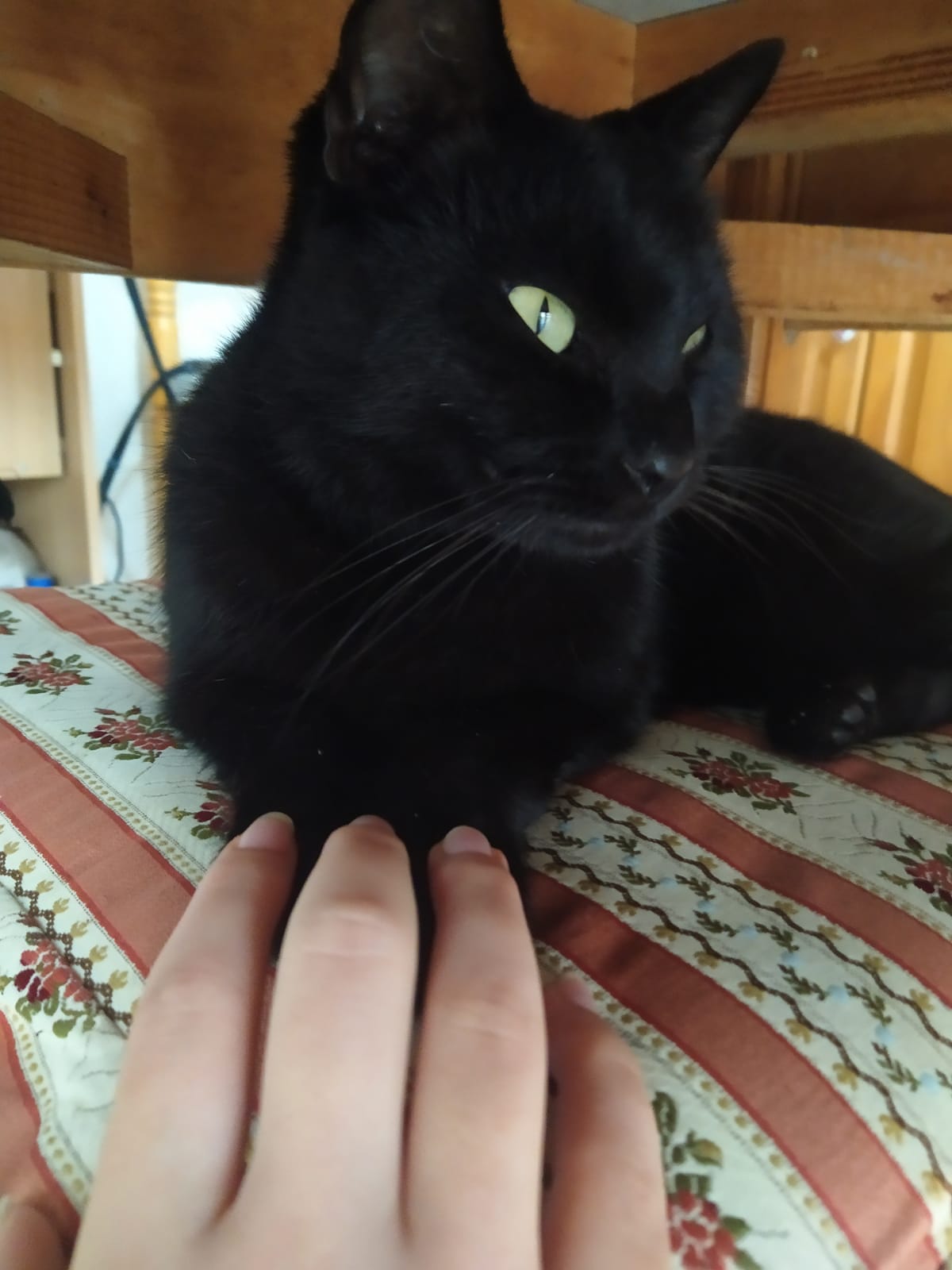}};
      \node (sleepsMeaning) at (7.5,4) {\includegraphics[scale=0.65]{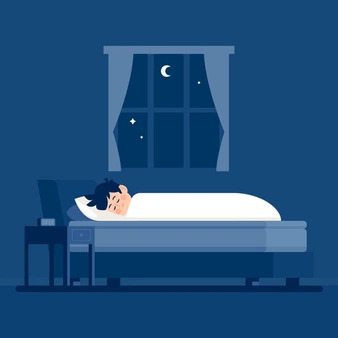}};

      \node (thecat) at (4.5,0){\emph{`the cat}};
      \node (sleeps) at (7.5,-.05){\emph{sleeps'}};
      \draw[->, gray!50] (D) to node[above, font=\footnotesize]{$s$} (E);
      \draw[->, gray!50] (D) to node[below left, font=\footnotesize]{$p$} (L);
      \draw[->, gray!50] (E) to node[right, font=\footnotesize]{$\framew{p}$} (L);

      \draw[|->] (bettinoDorme) -- (catMeaning);

      \draw[|->] (catMeaning) -- (thecat);
      \draw[|->] (sleepsMeaning) -- (sleeps);

    \end{tikzpicture}
  \end{center}
  \caption{Fibrations organize meanings compositionally over language.}\label{fig: the cat sleeps fibrational}
\end{figure}
\section{Fibrations bear gifts}
Thinking fibrationally gave us far more conceptual insights than we may originally have expected. Indeed, in the first draft of this paper we assumed \emph{opfibrations},
which can be understood as fibrations whose reindexing functions are covariant with respect to morphisms in the base. This felt wrong for many reasons; for example, it drew a stark contrast with the common approach to \emph{categorical logic}, which has greatly inspired us from the start. As it often happens in category theory, following any insights given by the mathematical model one is drafting is more rewarding than trying to bend the model to one's will.

To fully explain our model's philosophical insights, we will have to make a short digression. Consider the meaning of the word \emph{`cat'}. One may say that the pictures in~\autoref{fig: the cat sleeps} could represent this word. Indeed, the idea of depicting the meaning in pictorial terms~\cite{Coecke2021} works very well for nouns. However, it is difficult to generalise this insight to verbs: if we ask a hypothetical speaker to `imagine a cat', the speaker may promptly evoke a mental image of a cat. However, can we play the same mental experiment by asking a speaker to depict the meaning of `swimming'? We face the evident problem that `swimming' is a highly contextual linguistic constituent that needs to be paired with a subject to work properly. This apparent difference in behaviour can be resolved if we realise how \emph{the speaker is not evoking the meaning of single, atomic constituents of language, but the meaning of sentences}. In this interpretation, pictures in~\autoref{fig: the cat sleeps} represent the meaning of the sentence \emph{`the cat sleeps'}, together with the meaning of many other sentences that appropriately describe the pictures considered. When asked to imagine a cat, the speaker can project the \emph{cat} out of these pictures, going from the meaning of a fully specified sentence to one of its constituents.

Similarly, the meaning of `swimming' can be represented in the same way by evoking, for instance, a mental image of `someone that swims', and then projecting out the dynamic part of this sentence. This point of view turns the algebraic approach to grammar upside-down: we are not building meanings from atomic constituents but decomposing fully formed meanings into atomic abstractions that depend on context to exist properly. Many recent results in machine learning indeed confirm this idea: algorithms can translate images to text~\cite{Wang2020}, and text to images~\cite{Ramesh2021}, and isolate or blend elements in an image~\cite{Denton2016} -- such as taking the cat out of a picture of a cat and replacing it with another cat. In doing so, the role of sentences is central, as isolated linguistic constituents such as words and particles carry too little context with them to be successfully employable in these tasks.

These considerations brought us to believe that \emph{grammar is algebraic, meaning is coalgebraic}: the former obtains sentences by composing together atomic linguistic constituents, whereas the latter projects and decomposes the meaning of sentences to the meaning of its atomic constituents. Having realised this, it is now apparent that the definition of a fibration, which induces maps between the fibres varying \emph{contravariantly} with respect to the language, is the most appropriate one: language is but a tool guiding us in correctly decomposing mental pictures. We deem it important to stress how all these conceptual realisations stemmed directly from our endeavour to make our model mathematically more natural, not the other way around.
\color{black}
\begin{example}\label{ex: pregroup example}
  If we follow the mantra of DisCoCat~\cite{228d9e4b69926594fd26080f4cfaa9ecfca44eb3,Dostal2016,bc96de1cc022a0425e9f4f607c8d95064a6b3811} and postulate that $\clL$ is a \emph{pregroup}, that is, a thin compact closed category freely generated over a vocabulary~\cite{lambek2010compact,Kartsaklis2016},
  then there is a morphism
  \begin{equation}\label{eq: pregroup example}
    (\emph{the cat}, n) \otimes (\emph{sleeps}, n^l \cdot s) \xrightarrow{f} (\emph{the cat sleeps}, s)
  \end{equation}
  where $n$ denotes the noun type and $s$ the sentence type. For this example, we can define a toy semantics by defining the fibre over every word as the set of sentences that employ that word. So, for instance, the fibre over $(\emph{the cat}, n)$ contains all the possible sentences employing \emph{`the cat'} (which may also include categories). An analogous discourse holds for $(\emph{sleeps}, n^l \cdot s)$,
  while the fibre over $(\emph{the cat sleeps}, s)$ is a singleton. The `meaning' of the morphism $f$ is then given by a section that pics the same element in every fibre: the sentence \emph{the cat sleeps}.

  This example is a toy model, but we deem it very insightful as it constitutes a first, shy step towards the idea of doing distributional models of meaning by assuming distributional statistics of sentences instead of distributional statistics of words. Furthermore, we were happy to discover that similar approaches are currently being investigated by other research groups~\cite{Coecke2021}.
\end{example}
\begin{example}\label{ex: pregroup example 2}
  Let us now build up from the previous example, while also keeping in mind~\autoref{fig_fibres}. Let us consider the fibres over
  elements $(\emph{the cat}, n) \otimes (\emph{sleeps}, n^l \cdot s)$ and $(\emph{the cat sleeps}, s)$, respectively. Furthermore, let us suppose for the sake of simplicity that our fibration $\var[\framew{p}]{\clD^p}{\clL}$, whatever $\clD^p$ is, is furthermore strict monoidal, meaning that
  \begin{equation*}
    \clD^p_{(\emph{the cat}, n) \otimes (\emph{sleeps}, n^l \cdot s)} = \clD^p_{(\emph{the cat}, n)} \times \clD^p_{(\emph{sleeps}, n^l \cdot s)}.
  \end{equation*}
  An element in $\clD^p_{(\emph{the cat sleeps}, s)}$ represents a meaning of the sentence \emph{the cat sleeps}. Now let's focus on the reduction $f$ of~\autoref{eq: pregroup example} and on its corresponding reindexing function:
  \begin{equation*}
    \frac{(\emph{the cat}, n) \otimes (\emph{sleeps}, n^l \cdot s) \xrightarrow{f} (\emph{the cat sleeps}, s)}{\clD^p_{(\emph{the cat sleeps}, s)} \to \clD^p_{(\emph{the cat}, n)} \times \clD^p_{(\emph{sleeps}, n^l \cdot s)}}
  \end{equation*}
  The reindexing is mapping meanings of \emph{the cat sleeps} to meanings of its constituents. As such, we see very explicitly how language can be seen as a blueprint to decompose the meaning of sentences: the sentence structure, going \emph{forward} from components to complete sentences via reduction, informs us how to go \emph{backwards} from the meaning of the sentence to the meaning of its constituents.

  Interestingly, we notice that whereas the meaning of a sentence can be mapped \emph{exactly to one} pair of meanings of its constituents ($f$ induces a function), the opposite is not true: The same pair $(a,b)$ in $\clD^p_{(\emph{the cat}, n)} \times \clD^p_{(\emph{sleeps}, n^l \cdot s)}$ could correspond to \emph{many} different meanings in $\clD^p_{(\emph{the cat sleeps}, s)}$, as visually suggested in~\autoref{fig_fibres}. This suggests that:
  \begin{itemize}
    \item In \fiblang, meaning is \emph{truly} coalgebraic, in that it provides a well-defined way to go from the meaning of a concept to the meanings of it constituents, not the other way around;
    \item The process of 'merging meanings' is inherently ambiguous: thinking about a cat and about a bed is not enough to form the thought of a cat sleeping, as the latter contains many more details that may not be present in the components. As such, merging thoughts really is a creative process.
  \end{itemize}
\end{example}
As we hinted above, the fibrational description of languages establishes almost by default links to what is perhaps one of the most illustrious uses of category theory in logic~\cite{benabou1985fibered,180102927}:
\begin{remark}\label{rem: categorical logic}
  Once a \emph{signature} in the sense of \cite[§1.6]{CLTT} $\Sigma$ is specified, we can build contexts as strings of declarations $\Gamma = (x_1 : \sigma_1,\dots, x_n : \sigma_n)$, and assess judgments like
  \[\Gamma \vdash X : \tau\label{valid}\]
  to express that in context $\Gamma$, a term $X$ has type $\tau\in\Sigma$.

  To every signature, one can associate a \emph{classifying category} $\clC(\Sigma)$, whose objects are contexts $\Gamma$ above, and whose morphisms are suitable \emph{substitutions} of terms one inside the other.

  Now, there is an equivalence between
  \begin{itemize}
    \item models for the theory that the signature prescribes;
    \item functors $\clC(\Sigma) \to \Set$ that preserve finite products;
    \item certain fibrations $\var[\framew{p}]{\clE}{\clC(\Sigma)}$ over $\clC(\Sigma)$.
  \end{itemize}

  Such a fibration in the last item has as fibre over a given $\Gamma\in\clC(\Sigma)$ precisely the category/poset of judgments $X : \tau$ that are valid in context $\Gamma$, i.e. all the judgments like \eqref{valid}.
\end{remark}
In conclusion, a good parallel with our model for language representation --although a naive one for the reader versed in categorical logic-- is that as much as a fibre of $\var{\clE}{\clC(\Sigma)}$ over $\Gamma$ is the set of `judgments that can be deemed true' in context $\Gamma$, a fibre of $\var{\clE}{\clL}$ over $L$ is the set of meanings that can be attributed to $L$ by $\framew{p}$.

This is not all, as the fibrational approach keeps on giving: \autoref{rem: categorical logic} allows us to formulate another example to add to our list in \autoref{roundup_1}-\autoref{roundup_2}.
\begin{example}\label{roundup_3}
    The linguistic doctrine of \emph{maximally connected groupoids}\footnote{
    A maximally connected groupoid (MCG for short) is a category $\clG$ with the property that there exists precisely one morphism between any two given objects; this uniqueness implies that every morphism $a\to b$ is invertible, having as inverse the unique morphism $b\to a$; an MCG is `completely isotropic', in that no object of it can be distinguished from any other object. In this sense, an MCG is a structure that does not carry more information than the mere set of its objects.

    Figure \ref{maximal_gpds} depicts the first few maximally connected groupoids on sets with $1,2,3,4$ elements.
    }
    can be a tentative model for chaotic structures where `all words mean the same'; experimental artlangs as \emph{Zaum} \cite{kruchenykh1913slovo,zaum1,zaum2} (the `transrational' or `beyonsensical' language of the Russian avant-garde movement of early \nth{20} century), Marinetti's \emph{Paroliberismo} \cite{marinetti1972technical,austin1989anna} and D. Stratos' vocal experimentalism \cite{stratos}, as well as C. Vander's \emph{Kobaïan} \cite{magma1,magma2} constitute examples of such situations where the lack of definite meaning for words allows for fuller expression based on `pure sound' experience.

    Given an object $\clL\in\cate{MCG}$, it is of the form $\clG A$ for the functor $\clG$ of \cite[§3]{GRAY1980127} and some set $A$; thus, a functor $p : \clE \to \clL$ is uniquely determined by its function on objects $p_0 : \clE_0\to \clL_0$, and using the fibration property of \autoref{discfib} (and in particular the fact that a cartesian lift of an isomorphism is also an isomorphism) one can prove that
    \begin{itemize}
      \item all fibres have the same cardinality (so, we fix a set $X$ of such cardinality);
      \item a fibration $\var[\framew{p}]{\clE}{\clG A}$ is isomorphic to the projection $\pi'' : X\times\clG A \to \clG A$, in the sense that there exists an isomorphism $H$ of categories over $\clG A$,
      \[\vcenter{\xymatrix{
          \clE \ar[dr]_{\framew{p}} \ar[rr]^H && X\times\clG A \ar[dl]^{\pi''}\\
          & \clG A
      }}\]
    \end{itemize}
    Conceptually, this captures the idea that in these models `all words mean the same' -- since all words are isomorphic in $\clL$ and all fibres have the same cardinality.
    As we summarized above, in these models meaning is conveyed by relying on sound experience and free association of words more than relying on structured grammar.
\end{example}
\begin{figure}[t]
    \begin{center}
    \begin{tikzpicture}[>=stealth', scale=1.5]
        \fill (0,0) circle (1pt);
        \begin{scope}[xshift=1cm]
            \fill (0,0) circle (1pt);
            \fill (1,0) circle (1pt);
            \doublear{(0,0)}{(1,0)}
        \end{scope}
        \begin{scope}[xshift=3.5cm]
            \foreach \i in {0,1,2}
                \fill (\i * 360/3:.5) circle (1pt);
            \doublear{(0:.5)}{(360/3:.5)}
            \doublear{(0:.5)}{(2*360/3:.5)}
            \doublear{(360/3:.5)}{(2*360/3:.5)}
        \end{scope}
        \begin{scope}[xshift=6cm]
            \foreach \i in {0,1,2,3}
                \fill (\i * 360/4:.75) circle (1pt);
            \doublear{(0:.75)}{(360/4:.75)}
            \doublear{(0:.75)}{(2*360/4:.75)}
            \doublear{(0:.75)}{(3*360/4:.75)}
            \doublear{(360/4:.75)}{(2*360/4:.75)}
            \doublear{(3*360/4:.75)}{(2*360/4:.75)}
            \doublear{(360/4:.75)}{(3*360/4:.75)}
        \end{scope}
    \end{tikzpicture}
    \end{center}
    \caption{Maximally connected groupoids of cardinality $n=1,2,3,4$; the category structure of $\clG A$ is uniquely determined by the set of objects $A$, and sending $A$ to $\clG A$ defines a functor $\clG : \Set \to \Cat$ (cf. \cite[§3]{GRAY1980127}). All objects in a MCG are isomorphic in a unique way.}
    \label{maximal_gpds}
\end{figure}
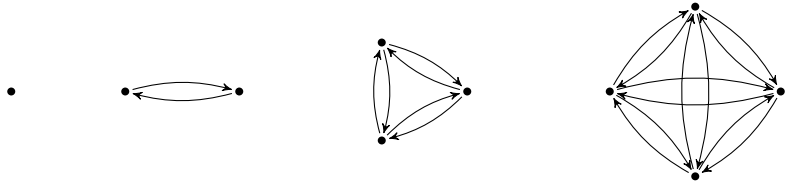

To conclude this section, we set to reconcile our differences between \fiblang and other categorical models of meaning. As we stressed in~\autoref{rem: discocat is upside down},
\fiblang is somewhat `upside-down' in that we are considering functors from meaning to grammar, whereas many other approaches do the opposite. The following theorem allows us to recast \fiblang in more familiar terms, although conceptually less insightful:
\begin{theorem}\label{thm: fibrations functors equivalence}
  Observe that there is a category $\cate{DFib}/\clL$ of fibrations over a given $\clL$, where a map $h : \var[\framew{p}]{\clE^p}{\clL} \to \var[\framew{q}]{\clE^q}{\clL}$ is a functor $h : \clE^p \to \clE^q$ such that $\framew{q}\cdot h=\framew{p}$ (see \autoref{cat_of_fibs}). There is an equivalence of categories:
  \[
    \nabla{} : \cate{DFib}/\clL \cong [\clL^\op, \Set] : \elts{}{}
  \]
  where at the left-hand side, we have the category of all functors $\clL^\op\to\Set$ and natural transformations thereof. The functor $F\mapsto\elts{}{F}$ is often called \emph{the category of elements construction}, or in its most general form \emph{the Grothendieck constuction}. We revise this construction in Appendix A.
\end{theorem}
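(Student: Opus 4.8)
The plan is to exhibit the two functors explicitly and verify that they are mutually quasi-inverse; there is no trick involved, only careful bookkeeping, the decisive leverage throughout being the \emph{uniqueness} clause in \autoref{def:_fibration}. First I would build $\nabla$. Given $F : \clL^\op \to \Set$, let $\nabla F$ be the category whose objects are pairs $(L,x)$ with $L\in\clL$ and $x\in FL$, and whose morphisms $(L,x)\to(L',x')$ are arrows $f : L\to L'$ of $\clL$ satisfying $(Ff)(x')=x$; composition and identities are those of $\clL$, and functoriality of $F$ makes this well-posed. The projection $\pi_F : \nabla F\to\clL$, $(L,x)\mapsto L$, is then a discrete fibration: given $(L',x')\in\nabla F$ and $f : L\to L'$ in $\clL$, the \emph{only} arrow of $\nabla F$ lying over $f$ with codomain $(L',x')$ is $f : (L,(Ff)(x'))\to(L',x')$. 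A natural transformation $\alpha : F\Rightarrow G$ induces $\nabla\alpha : \nabla F\to\nabla G$, $(L,x)\mapsto(L,\alpha_L x)$, which is a functor over $\clL$ exactly by naturality of $\alpha$; this makes $\nabla$ a functor $[\clL^\op,\Set]\to\cate{DFib}/\clL$ (recall $\cate{DFib}/\clL$ from \autoref{cat_of_fibs}).

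Next I would construct $\elts{}{-}$. To a discrete fibration $\framew{p} : \clE\to\clL$ assign $\elts{}{\framew{p}} : \clL^\op\to\Set$ sending $L$ to the fibre $\clE_L$ (a set, because $\framew{p}$ is a discrete fibration, cf.\ the remark after \autoref{notat_for_fib}) and $f : L\to L'$ to the reindexing function $f^* : \clE_{L'}\to\clE_L$. Both $(1_L)^* = \mathrm{id}_{\clE_L}$ and $(g\circ f)^* = f^*\circ g^*$ follow from uniqueness of cartesian lifts, so $\elts{}{\framew{p}}$ is \emph{strictly} functorial — no coherence isomorphisms appear, precisely because the fibration is discrete. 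A map $h : (\clE^p,\framew{p})\to(\clE^q,\framew{q})$ over $\clL$ sends each fibre $\clE^p_L$ into $\clE^q_L$ and commutes with reindexing, again by uniqueness of lifts (since $h$ carries a lift of $f$ to a lift of $f$); hence $h$ defines a natural transformation $\elts{}{\framew{p}}\Rightarrow\elts{}{\framew{q}}$, and $\elts{}{-}$ is a functor $\cate{DFib}/\clL\to[\clL^\op,\Set]$.

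Then I would check the two round-trips. For $F : \clL^\op\to\Set$, the fibre of $\nabla F$ over $L$ is $\{(L,x)\mid x\in FL\}$ and reindexing along $f$ is $(L',x')\mapsto(L,(Ff)(x'))$; the bijections $(L,x)\mapsto x$ assemble into a natural isomorphism $\elts{}{\nabla F}\cong F$, natural in $F$. For a discrete fibration $\framew{p} : \clE\to\clL$, there is a canonical comparison functor $\clE\to\nabla(\elts{}{\framew{p}})$ over $\clL$, namely $e\mapsto(\framew{p}e,e)$; it is a bijection on objects, it is the identity on fibres, and a morphism of $\clE$ over $f : L\to L'$ from $e$ to $e'$ exists if and only if $f^*(e')=e$ — which is exactly the hom-condition defining $\nabla(\elts{}{\framew{p}})$ — so the comparison is in fact an isomorphism of categories over $\clL$, natural in $\framew{p}$. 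These two families of isomorphisms are the unit and counit of the sought-after equivalence $\nabla\dashv\vdash\elts{}{-}$.

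The hard part is not conceptual at all; it is the discipline of the verifications, and the one point that genuinely deserves care is that the reindexing assignment $f\mapsto f^*$ is \emph{strictly} functorial. This is what separates the discrete case from the general Grothendieck correspondence, where one obtains only a pseudofunctor into $\Cat$ and must carry coherence data; here discreteness collapses all of that, and it is worth saying so explicitly rather than gesturing at ``the Grothendieck construction''. A secondary, purely set-theoretic caveat is size: to ensure $\elts{}{\framew{p}}$ really lands in $[\clL^\op,\Set]$ one should either restrict attention to fibrations with small fibres or fix a universe. Both points are handled by the conventions recalled in Appendix~A, so I would relegate them to a remark there and keep the proof body to the five lifting-uniqueness arguments above.
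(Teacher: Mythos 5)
Your proof is correct and is essentially the argument the paper relies on: the paper defers to the classical Grothendieck correspondence for discrete fibrations recalled in Appendix~A (\autoref{groco}), and your explicit construction of the two functors, the observation that discreteness makes the reindexing assignment \emph{strictly} functorial, and the verification of the two round-trips are exactly what that proposition packages. The only discrepancy is notational: the paper uses $\nabla$ for the straightening direction $\cate{DFib}/\clL\to[\clL^\op,\Set]$ and $\elts{}{-}$ for the category-of-elements direction, whereas you have swapped the two names.
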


\begin{example}[Connecting DisCoCat and \fiblang]
  \autoref{thm: fibrations functors equivalence} sheds light on the connection between \fiblang and DisCoCat: let $\clL$ be a Lambek pregroup and consider the functor $F: \clL \to \Set$ sending any object $(\emph{string}, \texttt{type})$ of $\clL$ to the underlying set of the one-dimensional subspace $\langle\emph{string}\rangle$ in the ambient space $\texttt{type}$. Applying $\elts{}{-}$ we obtain a fibration $\elts{}{F}$ where the meaning over each object is the \emph{distributional meaning} of that object as prescribed by DisCoCat.

  On the other hand, applying $\nabla{-}$ to our fibration $\framew{p}$, we get a functor $\nabla{\framew{p}}: \clL \to \Set$ that sends every object $(\emph{string}, \texttt{type})$ to the set of meanings that the string can have, in a way that is compatible with the morphisms of $\clL$. This recovers the usual correspondence from grammar to semantics, as seen in other categorical approaches.
\end{example}
\section{Formal foundations for fibrational linguistics}\label{ct_for_real}
Having explained our point of view in the friendliest way we could and feeling to have conceptually motivated our definitions, we conclude this work by describing \fiblang from a more mathematical standpoint.

Our main tool is the notion of fibration as defined in \autoref{def_ib}, and a number of well-known categorical constructions (mainly, comma objects \cite[1.6]{Bor1} and pullbacks \cite[2.5]{Bor1}, given that (cf. \cite[1.5.1]{CLTT}) the class of fibrations is closed under comma/pullbacks in $\Cat$).

Borrowing from a common terminology to refer to interesting subcategories of $\Cat$ (cf. \cite{9ded893391c4cbe24f74e46bcb6f326826e04049, GUcentazzo}), we give the following definition:
\begin{definition}[Linguistic doctrine]\label{ling_doctrina}
  We call a \emph{(linguistic) doctrine} a subcategory $\cate{LD}\subseteq\Cat$.
\end{definition}
Apart from the usual notational conventions, we
\begin{itemize}
  \item denote with letters $\clL, \clL'$ and the like the objects of a linguistic doctrine of sorts; the generic object $\clL\in\cate{LD}$ will be called a \emph{language};
  \item denote as $U : \cate{LD} \to \Cat$ the forgetful functor discarding the information that $\clL$ is an object of $\cate{LD}$. So, writing $U\clL$ is merely a way to refer to $\clL\in\cate{LD}$ just as a bare category.
\end{itemize}
\begin{definition}[The category $\Spk(\clL)$]\label{speakers}
  In the above notation, let $\clL$ be a language, $\cate{1}$ the terminal category (having a single object and a single identity morphism) and let $\lceil \clL\rceil : \cate{1} \to \Cat$ the functor selecting the object $U\clL \in \Cat$.

  Consider the comma category at the upper left corner of the square
  \[\vcenter{\xymatrix{
        U/\clL \ar[r]\ar[d] \drtwocell<\omit>{} & \cate{LD} \ar[d]^U \\
        1 \ar[r]_{\lceil \clL\rceil} & \Cat.
      }}\]
  The category $\Spk(\clL)$ is defined as the full subcategory of $U/\clL$ spanned by the fibrations $\var[p]{\clE}{\clL}$. In simple words, $\Spk(\clL)$ has
  \begin{itemize}
    \item as objects, the fibrations $\var[p]{\clE}{\clL}$ in $\Cat$;
    \item as morphisms between $\var[p]{\clE}{\clL}$ and $\var[q]{\clF}{\clL}$ all functors $H : \clE \to \clF$ such that $q\cdot H=p$.
  \end{itemize}
\end{definition}
Observe that a morphism of fibrations $H : \var[p]{\clE}{\clL} \to \var[q]{\clF}{\clL}$ commutes with the reindexing operations of \autoref{notat_for_fib}, inducing functors $H_L : \clE_L \to \clF_L$ between each two fibres that fits into commutative squares
\[\vcenter{\xymatrix{
  \clE_L\ar[r]^{H_L}\ar[d]_{u^*} & \clF_L \ar[d]^{u^*} \\
  \clE_{L'} \ar[r]_{H_{L'}} & \clF_{L'}.
  }}\]
  This means (as it should) as morphisms between $\var[p]{\clE}{\clL}$ and $\var[q]{\clF}{\clL}$ translates into a natural transformation between the associated functors $\clL^\op\to\Set$.
\begin{definition}[The category of extended languages]
  The assignment $\clL \mapsto \Spk(\clL)$ can be seen as a functor $\cate{LD}^\op \to \Cat$, acting on morphisms as pullback, from which using the Grothendieck construction of \autoref{groco} we can obtain a fibration $\var[\Lambda]{\cate{ELang}}{\cate{LD}}$ that we dub the category of \emph{extended languages}. A typical object in the total category of $\Lambda$ is a pair $(\clL, p\in\Spk(\clL))$, and a typical morphism $(\clL, p) \to (\clL', q)$ consists of a functor $\clL \to \clL'$ such that $H\cdot p = q$.
\end{definition}
\begin{remark}
  The category of extended languages is conceptually quite important (it collects in a single environment all speakers of all languages), but it has very little practical interest because it contains too many objects and too few morphisms. In fact, by construction, there can be a morphism between objects $p,q$ \emph{only if their domains coincide}. However, this is quite restrictive and unnatural as a request on speakers because it entails that $p,q$ have built the same mental image of their possibly different languages: communication is only possible between speakers of different languages that share a common worldview.

  If anything, we are interested in the opposite situation, namely modelling the process of
  \begin{itemize}
    \item comparing worldviews of speakers of a fixed common language $\clL$, and even better,
    \item let speakers of very different languages, possibly having very different total categories, interact together.
  \end{itemize}
\end{remark}
To this end, we shall consider a `compatible' arrow category on the objects of $\Spk(\clL)$; objects here are the morphisms $\var[p]{\clE}{\clL}$, and a morphism between $p$ and $q$ consists of a commutative square
\[\label{qui}
  \vcenter{\xymatrix{
      \clE \ar[r]^H\ar[d]_p & \clF \ar[d]^q \\
      U\clL \ar[r]_{UF} & U\clL'.
    }}
\]
Here, $F$ must lie in the image of $U$, i.e., it arises from a functor that is a homomorphism for the doctrine of definition. The rationale is that we want to compare $\clL,\clL'$ just as categories, but we want to consider morphisms that are structure-preserving for the defining properties of the doctrine $\cate{LD}$.
\begin{definition}[The category $\Spk^\to$]
  Let $\cate{LD}$ be a linguistic doctrine, and $U : \cate{LD} \to \Cat$ its forgetful functor. The category $\Spk^\to$ is obtained from the comma category $(\Cat/U)$ appearing as left upper corner of the following comma square
  \[\vcenter{\xymatrix{
        (\Cat/U)\ar[r]\ar[d]\drtwocell<\omit>{} & \Cat \ar@{=}[d]\\
        \textsf{LD} \ar[r]_U & \Cat
      }}\]
\end{definition}
as the full subcategory of $(\Cat/U)$ spanned by fibrations $p : \clE \to U\clL$.

Unwinding the definition, the category $\Spk^\to$ has
\begin{itemize}
  \item objects the fibrations $\var[p]{\clE}{U\clL}$, $\var[q]{\clF}{U\clL'}$;
  \item morphisms from $p$ to $q$ the commutative squares as in \eqref{qui}.
\end{itemize}
The category $\Spk^\to$ represents the formal environment underlying \fiblang. As we strive for friendliness and intercommunication with researchers in other fields such as pure linguistics, $\Spk^\to$ will be often obscured or just referenced in upcoming works such as~~\cite{fiblang1}. Nevertheless, we consider it to be of great value, as it is here that our mathematical intuition takes shape.
\section{Conclusion and future work}
This work focused on giving and conceptually motivating the core of \fiblang. In upcoming work \cite{fiblang1} we will focus on applications such as vocabulary acquisition and communication.

Another clear direction of future work is dropping the requisite of our fibrations to be discrete, as allowing more structure in the fibres over words may help capture interesting phenomena. In this sense, the comprehensive factorization of \cite{bams1183534973} can be generalized without pain to the factorization of a \emph{2-functor} $F$ between 2-categories along a canonically chosen non-discrete fibration.

\printbibliography
\appendix
\section{Category theory paraphernalia}
It is an old idea dating back to Grothendieck \cite{Grothendieck1972a,Vistoli2005} that the correspondence between sheaves and local homeomorphisms on a space $X$ can be generalized to non-thin categories. In fact, \emph{every} small category $\clC$ shall be thought of as some sort of `generalized space' \cite{Joyal1991,Hirschowitz2001}; in this perspective, every functor $F : \clC^\op \to \Cat$ -not only those defined over a category of open subsets- shall be thought as a certain generalized fibre bundle over $\clC$, whose fibres are exactly the categories~$FC$.

The so-called \emph{Grothendieck construction} substantiates this idea into a precise theorem, and the theory of \emph{fibrations} provides an analogue for the notion of a space/category `lying over' another and for the notion of local homeomorphism.

In the following, we let $\clC$ be a small category; a thorough presentation of the theory of fibrations is the scope of \cite[Ch. 1]{CLTT} and \cite[B1]{Johnstone2002}; our aim is just to make the axiomatics in the following subsection slightly more self-contained.
In all that follows, $\clE,\clC$ are categories nd $p : \clE \to \clC$ is a functor.
\begin{definition}[Fibration]\label{def_ib}
  The functor $p$ is called a \emph{fibration} if 
    the functor
    \[\vcenter{\xymatrix@R=0mm{\clE^2 \ar[r] & p/\clC \\
      \var{E}{E'} \ar@{|->}[r] & \var{pE}{pE'}}}\]
    induced by the universal property of the comma category $p/\clC$ from the cell
    \[\vcenter{\xymatrix{
      \clE^2 \drtwocell<\omit>{}\ar[r]^s \ar[d]_{p\cdot t} & \clE\ar[d]^p \\
      \clC \ar@{=}[r]& \clC
    }}\]
    has a right adjoint right inverse, i.e. a right adjoint $r : \clC/pE \to \clE/E$ with invertible counit ($s,t : \clE^2\to \clE$ are the source and target functors).\footnote{Strictly speaking, this is the definition of a \emph{cloven} or \emph{split} fibration; but we are not interested in marking the difference between the more general concept of a fibration and a split one, since it would ultimately only obfuscate the intuition. For examples of fibrations that are not cloven, see \cite[p. 10]{180102927}.}
\end{definition}
The domain of a fibration $p$ is often called the \emph{total category}, and its codomain the \emph{base} of the fibration.

Unwinding \autoref{def_ib}, one can see that a functor $p$ is a fibration if and only if the following condition is satisfied:
\begin{quote}
  Every arrow $C\to pE$ in the base has a canonical choice for a \emph{cartesian lift}.
\end{quote}
A \emph{($p$)-cartesian lift} (called a \emph{prone} morphism in \cite{Johnstone2002}) for $u : C\to C'$ consists of a morphism $f : E\to E'$ with the following property: $pf=u$ and for every object $Z\in\clE$, and every pair of morphisms $g : Z\to E'$ above and $w : pZ \to pE=C$ below, arranged in the following manner,
    \[
      \begin{tikzpicture}[>=stealth',baseline=(current bounding box.center)]
        \node[draw] (inE) at (0,3.5) {$\xymatrix@C=4mm@R=4mm{
          &Z\ar[dr]^g \ar@{.>}[dl]_h & \\
          E \ar[rr]_f&&E'
          }$};
        \node[draw] (inB) {$\xymatrix@C=4mm@R=4mm{
          &pZ\ar[dr]^{pg}\ar[dl]_w & \\
          pE \ar[rr]_{pf}&&pE'
          }$};
        \draw[|->] ($(inE.south)!.1!(inB.north)$) -- ($(inE.south)!.9!(inB.north)$);
        \node[right=2mm of inE] {$\clE$};
        \node[right=2mm of inB] {$\clC$};
      \end{tikzpicture}
    \]
there exists a unique $h : Z\to E$ filling the triangle in $\clE$ and mapping it to the triangle in $\clC$ along $p$.

A lot of properties can be deduced from the fact that a certain functor is a fibration: the one that is of interest for us is that such a cartesian lift for a given $C\to pE$ is unique up to a unique isomorphism when it exists.

This ensures that the following fundamental property is true:
\begin{quote}
  Given a morphism $u : C \to C'$ in the base category, there is a functor $u^* : \clE_{C'} \to \clE_C$, where $\clE_A$ is the \emph{fibre} of $p$ over $A$.
\end{quote}
\begin{remark}\label{how_to_save_a_fib}
  The proof that the fibration property entails the existence of reindexings $u^*$ is well-known and can be found in any introductory text on fibration theory: given an object $X\in \clE_{C'}$, the arrow $u : C\to pX$ has a cartesian lift; $u^*X$ is the (uniquely determined) domain of such cartesian lift.
\end{remark}
\begin{remark}
  In general this only defines a \emph{pseudo}functor $\clC^\op \to \Cat$ sending $C$ to $\clE_C$ and $u : C\to C$ to $u^* : \clE_{C'} \to \clE_C$, because although the isomorphisms $u^*\circ v^*\cong (v\circ u)^*$ and $1_C^*\cong 1_{\clE_C}$ are unique, and determined by the universal property of a cartesian lift, they are not the identity in general.
\end{remark}
  \begin{definition}[The category of fibrations]\label{cat_of_fibs}
  The \emph{category of fibrations} over $\clC$ has
  \begin{enumtag}{cf}
    \item objects the fibrations $p : \clE \to \clC$ over $\clC$, regarded as objects of $\Cat/\clC$;
    \item morphisms $(\clE,p) \to (\clF,q)$ the morphisms $h : \clE \to \clF$ of $\Cat/\clC$ that preserve cartesian arrows in the domain category;
    \item 2-cells the natural transformations having `$q$-vertical components', i.e. those $\alpha : h \To h'$ is a natural transformation such that $q * \alpha  =1_p$.
  \end{enumtag}
\end{definition}
An important particular instance of \autoref{def_ib} is when all fibres $\clE_C$ are discrete; in that case, each reindexing $u^* : \clE_{C'}\to \clE_C$ is a \emph{function}, and we obtain a strict functor $\clC^\op \to \Set$.
\begin{definition}[Discrete fibration]\label{discfib}
  A fibration $p : \clE \to \clC$ is called \emph{discrete} if for every morphism $u : C \to pE$ there exists a unique $h : E' \to E$ such that $ph=u$.
\end{definition}
\begin{remark}
  The fibres $\clE_C$ of a functor $p : \clE \to \clC$ play an essential role in the so-called \emph{Grothendieck construction}: every pseudofunctor $\clC^\op \to \Cat$ determines a fibration over $\clC$, its `category of elements'; viceversa, given a functor $p : \clE \to \clC$, the correspondence $C\mapsto \clE_C$ can be `straightened' to a functor $\clC^\op\to\Cat$ if and only if $p$ is a fibration, following the recipe described in \autoref{how_to_save_a_fib}.
\end{remark}
\begin{definition}[Category of elements]\label{eltsf}
  Let $W : \clC^\op \to \Cat$ be a functor; the \emph{category of elements} $\elts{\clC}{W}$ of $W$ is the category having
  \begin{enumtag}{ce}
    \item as objects, the pairs $(C\in\clC, x\in WC)$;
    \item as morphisms $(C,x)\to (C',y)$ those $f\in \clC(C,C')$ such that $W(f)(y)=x$.
  \end{enumtag}
\end{definition}
Evidently, for every $W : \clC^\op \to \Set$, the functor $\var{\elts{\clC}{W}}{\clC}$ sending $(C,x)$ to $C$ and $f$ to $f$ is a discrete fibration, called the \emph{fibration of elements} of $W$.

The Grothendieck construction relies on the fact that \emph{every} fibration $\var[p]{\clE}{\clC}$ is the fibration of elements of a certain functor $W_p : \clC^\op \to \Set$: the functor defined on objects as in \autoref{how_to_save_a_fib}.
\begin{proposition}[The Grothendieck construction]\label{groco}\leavevmode
  Sending a functor $W : \clC^\op\to\Set$ to its category of elements defines a fully faithful functor
  \[\xymatrix{\Cat(\clC^\op,\Set) \,\ar@{^{(}->}[r] & \Cat/\clC;}\]
  whose essential image is the subcategory of discrete fibrations over $\clC$.
\end{proposition}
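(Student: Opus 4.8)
The statement bundles three claims: that $W \mapsto \bigl(\elts{\clC}{W}\to\clC\bigr)$ is a functor $\Cat(\clC^\op,\Set)\to\Cat/\clC$, that this functor is full and faithful, and that its essential image is exactly the class of discrete fibrations over $\clC$. The plan is to dispatch them in that order, using the recipe of \autoref{how_to_save_a_fib} for the last part. For functoriality, given a natural transformation $\alpha : W\To W'$ I would set $\elts{\clC}{\alpha}(C,x):=(C,\alpha_C x)$ on objects and $f\mapsto f$ on morphisms. Well-definedness is precisely the naturality square: a morphism $(C,x)\to(C',y)$ of $\elts{\clC}{W}$ satisfies $W(f)(y)=x$, and $\alpha_C\circ W(f)=W'(f)\circ\alpha_{C'}$ yields $W'(f)(\alpha_{C'}y)=\alpha_C x$, so $f : (C,\alpha_C x)\to(C',\alpha_{C'}y)$ lives in $\elts{\clC}{W'}$; identities and composites are preserved because the action on morphisms is the identity, and the evident triangle over $\clC$ commutes strictly, so we land in $\Cat/\clC$.

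Faithfulness is then immediate: if $\elts{\clC}{\alpha}=\elts{\clC}{\beta}$ as functors over $\clC$, evaluating at an object $(C,x)$ forces $\alpha_C x=\beta_C x$ for every $C$ and every $x\in WC$, hence $\alpha=\beta$. For fullness, let $H : \elts{\clC}{W}\to\elts{\clC}{W'}$ be a functor over $\clC$; commutation with the two projections forces $H(C,x)=(C,\alpha_C x)$ for a uniquely determined function $\alpha_C : WC\to W'C$, and forces $H$ to act as the identity on morphisms, since a morphism of a category of elements is determined by its image in $\clC$. Naturality of $\alpha=(\alpha_C)$ then falls out of feeding $H$ the canonical morphism $f : (C,W(f)y)\to(C',y)$: its $H$-image over $f$ is a morphism $(C,\alpha_C(W(f)y))\to(C',\alpha_{C'}y)$ of $\elts{\clC}{W'}$, which by the definition of such morphisms means $W'(f)(\alpha_{C'}y)=\alpha_C(W(f)y)$. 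Hence $H=\elts{\clC}{\alpha}$, and the functor is fully faithful.

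For the essential image, one inclusion is the remark preceding the statement: every $\elts{\clC}{W}\to\clC$ is a discrete fibration. For the converse, given a discrete fibration $p : \clE\to\clC$ I would recover $W$ as $W_p$, with $W_p C:=\clE_C$ (a set, by \autoref{discfib}) and $W_p(f):=f^*$ the reindexing \emph{function} of \autoref{how_to_save_a_fib}; because cartesian lifts of a given arrow with a prescribed target are literally unique here, the coherence isomorphisms $1^*\cong\mathrm{id}$ and $f^*g^*\cong(gf)^*$ are forced to be identities, so $W_p : \clC^\op\to\Set$ is a strict functor. It then remains to exhibit an isomorphism $\elts{\clC}{W_p}\cong\clE$ over $\clC$: send $(C,E)$ to $E$ and a morphism $f : (C,E)\to(C',E')$ (i.e. one with $f^*E'=E$) to the unique lift of $f$ with target $E'$; the inverse sends $g : E\to E'$ to $pg$, which is legitimate because in a discrete fibration every morphism is the cartesian lift of its own image, so that $(pg)^*E'=E$. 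These assignments are mutually inverse and commute with the projections, giving $p\cong\elts{\clC}{W_p}$ in $\Cat/\clC$.

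\textbf{Expected obstacle.} None of the steps is deep — the content is bookkeeping rather than insight — and the one place that genuinely needs care is the essential-image half: checking that the reindexing data of a discrete fibration really assemble into a \emph{strict} functor $\clC^\op\to\Set$ (no pseudofunctoriality survives), and that the comparison $\elts{\clC}{W_p}\to\clE$ is bijective on objects and on each hom-set with a functorial inverse. Both reduce to the single rigidity fact that in a discrete fibration a cartesian lift with prescribed target is unique on the nose, not merely up to isomorphism, which is exactly what collapses the usual $2$-categorical coherence down to equalities.
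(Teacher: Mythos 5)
Your proof is correct and follows exactly the route the paper itself takes: the paper states \autoref{groco} without an explicit proof, relying on the observation that each $\elts{\clC}{W}\to\clC$ is evidently a discrete fibration and on the recipe of \autoref{how_to_save_a_fib} for recovering $W_p$ from a discrete fibration. Your write-up supplies the remaining bookkeeping --- functoriality in $\alpha$, full faithfulness, the strictness of $W_p$ forced by on-the-nose uniqueness of lifts in a discrete fibration, and the comparison isomorphism over $\clC$ --- with no gaps.
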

\begin{construction}[The comprehensive factorization of a functor]\label{stree_fattorizia}
  Given a functor $F : \clC \to \clD$, there exist two factorizations of $F$ into
  \begin{itemize}
    \item a discrete opfibration $p : \clE \to \clD$ followed by an initial functor;
    \item a discrete fibration $q : \clE \to \clD$ followed by a final functor.
  \end{itemize}
  The construction of $p$ given in \cite{bams1183534973} goes as follows. Consider the left Kan extension $k : \clD\to\Set$
  \[\vcenter{\xymatrix{
    \clC \ar[r]^F\ar[d] & \clD\ar[d]^k  \\
    {*} \ar[r] & \Set \ultwocell<\omit>{}
  }}\]
  of the terminal presheaf along $F$; the universal property of the comma object now yields a factorization
  \[\vcenter{\xymatrix{
    \clC \ar[dr]\ar[r] \ar@/^1pc/[rr]^F & (*/k) \ar[r]\ar[d] & \clD\ar[d]^k  \\
    & {*} \ar[r] & \Set \ultwocell<\omit>{}
  }}\]
  where $\clE=(*/k)$ is the category of elements of $k$, and by the Lemma below, a discrete opfibration. The rest of the proof shows that $\clC \to \clE$ is initial, but since we have little interest in the properties of this half of the factorization, we refrain from repeating the argument.
\end{construction}\smallskip
The key ingredient for the proof is the following
\begin{lemma}\label{lemmacomma}
  Given a comma square
  \[\vcenter{\xymatrix{
    (F/G) \drtwocell<\omit>{}\ar[r]\ar[d] & \clA \ar[d]^F\\
    \clB \ar[r]_G & \clC
  }}\]
  \begin{itemize}
  \item the upper horizontal arrow $\var{(F/G)}{\clA}$ is a fibration;
  \item the left vertical arrow $\var{(F/G)}{\clB}$ is an opfibration.
  \end{itemize}
\end{lemma}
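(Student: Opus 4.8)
The plan is to unwind the comma category $(F/G)$ explicitly and exhibit, for the projection $P : (F/G)\to\clA$, a canonical cartesian lift of every arrow of $\clA$; the statement for $Q : (F/G)\to\clB$ then follows by formal duality. Recall that an object of $(F/G)$ is a triple $(a,b,\phi)$ with $a\in\clA$, $b\in\clB$ and $\phi : Fa\to Gb$ in $\clC$, and a morphism $(a,b,\phi)\to(a',b',\phi')$ is a pair $(u,v)$, $u : a\to a'$ in $\clA$ and $v : b\to b'$ in $\clB$, subject to $Gv\circ\phi=\phi'\circ Fu$; the functors $P$ and $Q$ are the evident projections onto the first and second coordinate. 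By the characterisation recorded immediately after \autoref{def_ib}, it suffices to equip $P$ (resp.\ $Q$) with a choice of cartesian (resp.\ opcartesian) lifts.

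Next I would produce the lift. Given an object $(a',b',\phi')$ of $(F/G)$ and a morphism $u : a\to a'$ in $\clA$, set $E=(a,b',\phi'\circ Fu)$ and $f=(u,\mathrm{id}_{b'}) : E\to (a',b',\phi')$; this is a legitimate morphism of $(F/G)$ since $G(\mathrm{id}_{b'})\circ(\phi'\circ Fu)=\phi'\circ Fu$. To see that $f$ is cartesian over $u$, take any $Z=(a'',b'',\psi)$, any morphism $(g_1,g_2) : Z\to(a',b',\phi')$, and any $w : a''\to a$ with $u\circ w=g_1$. A lift $h : Z\to E$ with $Ph=w$ is necessarily of the form $h=(w,k)$, and the requirement $f\circ h=(g_1,g_2)$ forces $k=g_2$; conversely $h=(w,g_2)$ is a well-defined morphism of $(F/G)$ precisely because $Gg_2\circ\psi=\phi'\circ Fg_1=(\phi'\circ Fu)\circ Fw$, which holds exactly because $(g_1,g_2)$ is a morphism $Z\to(a',b',\phi')$. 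Hence $h$ exists and is unique, so $f$ is a cartesian lift of $u$; choosing these lifts for all $u$ yields the canonical cleavage demanded by \autoref{def_ib} (one checks it is in fact a splitting, since $\mathrm{id}^\ast=\mathrm{id}$ and $(u'\circ u)^\ast=u^\ast\circ u'^\ast$ on the nose). Therefore $\var{(F/G)}{\clA}$ is a fibration.

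Finally, the opfibration claim follows by dualising. The canonical isomorphism $(F/G)^\op\cong(G^\op/F^\op)$ identifies $Q^\op$ with the first projection $(G^\op/F^\op)\to\clB^\op$, and the argument just given, applied to $G^\op : \clB^\op\to\clC^\op$ and $F^\op : \clA^\op\to\clC^\op$, shows this projection is a fibration; hence $Q$ is an opfibration. (Equivalently, one writes the opcartesian lift of $v : b\to b'$ at $(a,b,\phi)$ directly as $(\mathrm{id}_a,v) : (a,b,\phi)\to(a,b',Gv\circ\phi)$ and repeats the verification, mutatis mutandis.) I do not anticipate a real obstacle: the only delicate point is bookkeeping the two morphism directions and remembering that in the cartesian lift for $P$ it is the $\clB$-component that is held fixed, while in the opcartesian lift for $Q$ it is the $\clA$-component.
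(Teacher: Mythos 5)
Your proof is correct. The paper itself offers no argument here, deferring entirely to \cite[Exercise 1.4.6]{CLTT}; your explicit description of the cartesian lift $(u,\mathrm{id}_{b'}) : (a,b',\phi'\circ Fu)\to(a',b',\phi')$, the uniqueness check, and the dualisation $(F/G)^\op\cong(G^\op/F^\op)$ for the opfibration half are exactly the standard solution to that exercise, and your observation that the resulting cleavage is split matches the convention adopted in \autoref{def_ib}.
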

\begin{proof}
  See \cite[Exercise 1.4.6]{CLTT}.
\end{proof}
From here, the claim that $p : (*/k) \to\clD$ is a discrete opfibration follows; in order to factor $F$ as a discrete fibration followed by a final functor, just apply the above construction to the opposite of $F$, the functor $F^\op : \clC^\op\to\clD^\op$.
\end{document}